\numberwithin{figure}{section}
\definecolor{blue2}{cmyk}{.94,.11,0,0}
\definecolor{myblue}{rgb}{.8, .8, 1}
\newlength\mytemplen
\newsavebox\mytempbox
\DeclareFontFamily{U}{mathx}{}
\DeclareFontShape{U}{mathx}{m}{n}{<-> mathx10}{}
\DeclareSymbolFont{mathx}{U}{mathx}{m}{n}
\DeclareMathAccent{\widecheck}{0}{mathx}{"71}
\DeclareMathAlphabet\mathbfcal{OMS}{cmsy}{b}{n}
\renewenvironment{thebibliography}[1]
     {\section*{\refname}
      \@mkboth{\MakeUppercase\refname}{\MakeUppercase\refname}
      \begin{enumerate}[label={[\arabic{enumi}]},itemindent=*,leftmargin=2.5em]
      \@openbib@code
      \sloppy
      \clubpenalty4000
      \@clubpenalty \clubpenalty
      \widowpenalty4000
      \sfcode`\.\@m}
     {\def\@noitemerr
       {\@latex@warning{Empty `thebibliography' environment}}
      \end{enumerate}}
\newcommand{\newparallel}{\mathrel{\mathpalette\new@parallel\relax}}
\newcommand{\new@parallel}[2]{
  \begingroup
  \sbox\z@{$#1T$}
  \resizebox{!}{\ht\z@}{\raisebox{\depth}{$\m@th#1/\mkern-4.5mu/$}}
  \endgroup
}
\newcommand{\nnewparallel}{\mathrel{\mathpalette\nnew@parallel\relax}}
\newcommand{\nnew@parallel}[2]{
  \begingroup
  \sbox\z@{$#1T$}
  \resizebox{!}{\ht\z@}{\raisebox{\depth}{$\m@th#1/\mkern-4.5mu/\!\!\!\!\backslash $}}
  \endgroup
}
\newcommand\cb{
    \@ifnextchar[
       {\@cb}
       {\@cb[5pt]}}
\def\@cb[#1]{
    \@ifnextchar[
       {\@@cb[#1]}
       {\@@cb[#1][5pt]}}
\def\@@cb[#1][#2]#3{
    \sbox\mytempbox{#3}
    \mytemplen\ht\mytempbox
    \advance\mytemplen #1\relax
    \ht\mytempbox\mytemplen
    \mytemplen\dp\mytempbox
    \advance\mytemplen #2\relax
    \dp\mytempbox\mytemplen
    \colorbox{myblue}{\hspace{1em}\usebox{\mytempbox}\hspace{1em}}}
\newcommand{\lmt}{\longmapsto}
\newcommand{\la}{{\langle}}
\newcommand{\ra}{{\rangle}}
\newcommand{\ind}{{\perp\!\!\!\perp}}
\newcommand{\bs}{\boldsymbol}
\newcommand{\mc}{\mathcal}
\renewcommand{\d}{{\mathrm d}}
\newcommand{\R}{{\Bbb R}}
\newcommand{\Id}{{\rm Id}}
\newcommand{\defeq}{{\stackrel{\rm def}{=}}}
\renewenvironment{proof}[1][\proofname]{\noindent {\bfseries #1.}\;}{\hfill\ensuremath{\blacksquare}\\}
\newcommand{\cc}{{^\circ}}
\newcommand{\bi}{\mathbf i}
\newcommand{\bj}{\mathbf j}
\newcommand{\bk}{\mathbf k}
\newcommand{\two}{{\sqrt{2}}}
\newcommand{\CN}{E^{\it N}}
\newcommand{\wt}{\widetilde}
\patchcmd{\thebibliography}{\section*}{\section}{}{}
\newtheoremstyle{slantthm}{10pt}{10pt}{\slshape}{}{\bfseries}{}{.5em}{\thmname{#1}\thmnumber{ #2}\thmnote{ (#3)}.}
\newtheoremstyle{slantrmk}{10pt}{10pt}{\rmfamily}{}{\bfseries}{}{.5em}{\thmname{#1}\thmnumber{ #2}\thmnote{ (#3)}.}
\begin{document}
\theoremstyle{slantthm}
\newtheorem{thm}{Theorem}[section]
\newtheorem{prop}[thm]{Proposition}
\newtheorem{lem}[thm]{Lemma}
\newtheorem{cor}[thm]{Corollary}
\newtheorem{defi}[thm]{Definition}
\newtheorem{claim}[thm]{Claim}
\newtheorem{disc}[thm]{Discussion}
\newtheorem{conj}[thm]{Conjecture}

\theoremstyle{slantrmk}
\newtheorem{ass}[thm]{Assumption}
\newtheorem{rmk}[thm]{Remark}
\newtheorem{eg}[thm]{Example}
\newtheorem{question}[thm]{Question}
\numberwithin{equation}{section}
\newtheorem{quest}[thm]{Quest}
\newtheorem{problem}[thm]{Problem}
\newtheorem{discussion}[thm]{Discussion}
\newtheorem{notation}[thm]{Notation}
\newtheorem{observation}[thm]{Observation}

\definecolor{db}{RGB}{13,60,150}
\definecolor{dg}{RGB}{150,40,40}

\newcommand{\thetitle}{\vspace{-1.5cm}
Stochastic motions of the two-dimensional many-body delta-Bose gas, IV: Transformations of relative motions}

\title{\bf \thetitle\footnote{Support from an NSERC Discovery grant is gratefully acknowledged.}}

\author{Yu-Ting Chen\,\footnote{Department of Mathematics and Statistics, University of Victoria, British Columbia, Canada.}\,\,\footnote{Email: \url{chenyuting@uvic.ca}}}

\date{\today\vspace{-1cm}}

\maketitle
\abstract{This paper is the last in a series devoted to constructing stochastic motions representing the two-dimensional $ N$-body delta-Bose gas for all integers $N\geq 3$ via Feynman--Kac-type formulas. The main result here supplements \cite{C:SDBG1-4,C:SDBG2-4} of the series by proving a bijective transformation between two general classes of Langevin-type SDEs such that the SDEs of one class describe precisely the stochastic relative motions of the SDEs of the other class.  \medskip 

\noindent \emph{Keywords:} Delta-Bose gas; Schr\"odinger operators; interacting diffusions. \smallskip

\noindent \emph{Mathematics Subject Classification (2020):} 60H10; 60G07; 35Q40.\vspace{-.4cm}
}

\section{Introduction}\label{sec:intro}
This paper is the last in a series devoted to constructing stochastic motions representing the two-dimensional many-body delta-Bose gas via Feynman--Kac-type formulas. Our goal here is to study two closely related problems on stochastic descriptions in the sense of E. Nelson \cite{Nelson-4} for general many-body quantum Hamiltonians of the following form:
\begin{align}\label{def:QMHamiltonian}
-\frac{1}{2}\sum_{i=1}^N\Delta_{z^i}+\sum_{\bi\in \mathcal E_N} V_\bi(z^{i\prime}-z^i),\quad z^i\in E,
\end{align}
where $E\,\defeq\, \R^d$ for $d\in \Bbb N$, $N\geq 2$, $\mathcal E_N\,\defeq\,\{\bi=(i\prime,i)\in \Bbb N^2;1\leq i<i\prime\leq N\}$, $\Delta_z$ denotes the $d$-dimensional Laplacian in $z\in \R^d$, and $V_\bi(\cdot)$ are meant as potential energy functions. 

The two problems mentioned above concern the transformations of SDEs. Specifically, the first problem considers the following Langevin-type SDEs:
\begin{align}\label{def:Ziintro}
\mc Z^i_t=\mc Z^i_0-\sum_{j:j\neq i}\int_0^t a_{i j}(s)(\mc Z^i_s-\mc Z^j_s)\d s+W^i_t,\quad \forall\;1\leq i\leq N,
\end{align}
obeyed by a family of $E$-valued processes $\{\mathcal Z^i_t\}_{1\leq i\leq N}$ for $N$ particles. 
Here, $a_{ij}(s)$ are progressively measurable real-valued processes such that integral terms in \eqref{def:Ziintro} are well-defined, and $\{W^i_t\}_{1\leq i\leq N}$ consists of independent $d$-dimensional standard Brownian motions with zero initial conditions. Under this setting, we are interested in the transformations of the SDEs in \eqref{def:Ziintro} to the SDEs obeyed by the {\bf stochastic relative motions} defined by the following $E$-valued processes:
 \begin{align}\label{def:Zpbia}
 (\mathcal Z^{i\prime}_t-\mathcal Z^i_t)/\two,\quad \bi=(i\prime,i)\in \mc E_N.
 \end{align}
Moreover, we seek to determine the SDEs of the corresponding transformation in \eqref{def:Zpbia}
to the degree of expressing the drift coefficients of the SDEs \emph{in terms of} the stochastic relative motions. 
By contrast, the second problem is a converse on reconstructing the SDEs of $E$-valued processes $\{\mathcal Z^i_t\}_{1\leq i\leq N}$ for $N$ particles from given SDEs obeyed by a family of $E$-valued processes $\{\mathcal Z^\bi_t\}_{\bi\in \mc E_N}$ such that $\{\mathcal Z^\bi_t\}_{\bi\in \mc E_N}$ recovers the stochastic relative motions \eqref{def:Zpbia} of $\{\mathcal Z^i_t\}_{1\leq i\leq N}$. Note that these two problems have arisen from \cite{C:SDBG1-4,C:SDBG2-4} for the quantum Hamiltonian of the two-dimensional many-delta Bose gas, whose formal description uses \eqref{def:QMHamiltonian} with $V_\bi(z)\equiv -\Lambda_\bi\delta(z)$ and $d=2$. In particular, the SDEs in \eqref{def:Ziintro} generalize the SDEs of the stochastic one-$\delta$ and many-$\delta$ motions studied in \cite{C:SDBG1-4,C:SDBG2-4}. See Remark~\ref{rmk:motivation} for additional details. 

The main result of this paper (Proposition~\ref{prop:RM}) solves 
Problem~\ref{prob:RM} stated below, which summarizes the two problems mentioned above and begins with a preliminary problem concerning the ``consistency'' of states that is crucial for solving the second problem.

\begin{problem}\label{prob:RM}
Let $N\geq 2$ be an integer.
\begin{itemize}
\item [\rm (1$\cc$)] Determine minimal conditions on families $\{z^\bi\}_{\bi\in \mc E_N}$ of $E$-valued points such that these families equal the families of states of (normalized) relative motions
\begin{align}\label{def:zbia}
(z^{i\prime}-z^i)/\two, \quad \bi=(i\prime,i)\in \mc E_N,
\end{align}
for some $\{z^i\}_{1\leq i\leq N}$ of $E$-valued points regarded as the states of $N$ particles in $E$.

\item [\rm (2$\cc$)] Assume that $\{\mathcal Z^i_t\}_{1\leq i\leq N}$ obeys \eqref{def:Ziintro}. Determine the SDEs of the associated stochastic relative motions such that the drift coefficients are expressed in terms of the stochastic relative motions and $a_{ij}(s)$.  
\item [\rm (3$\cc$)] Determine minimal conditions on SDEs with solutions $\{\mathcal Z_t^\bi\}_{\bi\in \mc E_N}$ of $E$-valued processes such that these solutions coincide with the stochastic relative motions of solutions of SDEs taking the form of \eqref{def:Ziintro}. 
\end{itemize}
\end{problem}

More specifically, Problem~\ref{prob:RM}~(1$\cc$) asks to solve for $\{z^i\}_{1\leq i\leq N}$ from the linear equations $z^\bi=(z^{i\prime}-z^i)/\two$ for all $\bi=(i\prime,i)\in \mc E_N$. The issue here is that for $N=3$, the $3\times 3$ coefficient matrix of these linear equations is not invertible. Moreover, as soon as $N\geq 4$, the system of these linear equations is overdetermined. 

In the remainder of this paper, {\bf we neither consider only $d=2$ nor follow the notations in the former parts \cite{C:SDBG1-4,C:SDBG2-4,C:SDBG3-4} of the series in which $\mathcal Z^\bi_t$'s denote the processes in (\ref{def:Zpbia}) and $z^\bi$'s denote the states in (\ref{def:zbia}).}  Unless otherwise mentioned, $\bi$'s in the superscripts will only mean to index processes or states. Moreover, the proofs in this paper are self-contained and do not depend on any former parts of the series. \vspace{-.4cm}

\section{Transformations of Langevin-type SDEs}
Our goal in this section is to solve Problem~\ref{prob:RM}. The key notion we introduce to approach Problem~\ref{prob:RM} (1$\cc$) is the following definition. 

\begin{defi}\label{def:dc}
Fix an integer $N\geq 2$. A family $\{z^\bi\}_{\bi\in \mathcal E_N}$ of $E$-valued points is {\bf difference-consistent} if for any $\bi=(i\prime,i),\bj=(j\prime,j)\in \mathcal E_N$ with $i=j\prime$, 
\begin{align}\label{defeq:dc}
z^\bi+z^\bj=z^{\bi\oplus \bj},\quad \mbox{ where }\bi\oplus \bj=\bj\oplus \bi\,\defeq\,(i\prime,j)\in \mathcal E_N.
\end{align}
The set of $(z^{(2,1)},\cdots,z^{(N,N-1)})$ from
 difference-consistent families is denoted by $\delta \CN$.
\end{defi}

Definition~\ref{def:dc} specifies the first minimal condition for solving Problem~\ref{prob:RM} (1$\cc$).
Note that for $N=2$, $\{z^\bi\}_{\bi\in \mc E_2}$ is a singleton, so the difference-consistency holds trivially by definition. For any $N\geq 3$, 
any difference-consistent family $\{z^\bi\}_{\bi\in \mathcal E_N}$ is uniquely determined by $(z^{(2,1)},\cdots,z^{(N,N-1)})$ via the following {\bf telescoping representation}: 
\begin{align}\label{eq:telescoping}
\forall\;\bi=(i\prime,i)\in \mathcal E_N, \; z^{(i\prime,i)}=z^{(i\prime,i\prime-1)}+\cdots +z^{(i+1,i)}. 
\end{align}
In particular, since, given any $(z^{(2,1)},\cdots,z^{(N,N-1)})\in E^{N-1}$, \eqref{eq:telescoping} defines a difference-consistent family, the linear space $\delta \CN$ is isomorphic to $E^{N-1}$ as vector spaces.  

Proposition~\ref{prop:RM} below is the main result of this paper. Proposition~\ref{prop:RM}~(1$\cc$) gives a complete solution to Problem~\ref{prob:RM} (1$\cc$), and Proposition~\ref{prop:RM}~(2$\cc$) solves Problem~\ref{prob:RM} (2$\cc$) and (3$\cc$) simultaneously with a bijective transformation between two general classes of Langevin-type SDEs. Via this transformation, the SDEs of one class describe precisely the stochastic relative motions of the SDEs of the other class. In particular, Definition~\ref{def:dc} and the center of mass implied by the last row of $\mathsf R$ in \eqref{def:R} are the key ingredients of our resolution of Problem~\ref{prob:RM}; see Remark~\ref{rmk:com} for more details. We remark that for $N\geq 3$, our choice of the center of mass to complement Definition~\ref{def:dc} was originally motivated by \cite{SP:08-4} in a similar context of interacting Brownian motions, while the use of the center of mass is classical for $N=2$. See also \cite{GH:Short-4} on applying the center of mass at the operator level for $N\geq 3$. 

\begin{prop}[Main result]\label{prop:RM}
{\rm (1$\cc$)} As vector spaces,
$\CN$ is isomorphic to  $\delta \CN\times E$ via the following
linear transformation $\mathsf R:\CN\to \delta \CN\times E$:
\begin{align}
\mathsf R\;\defeq\, \begin{bmatrix}
-1/\two & 1/\two & 0&\cdots &0\\
0 &-1/\two&1/\two& \cdots &0\\
\vdots &\vdots &\ddots &\vdots&\vdots\\
0 &0 &\cdots &-1/\two &1/\two\\
1/N&1/N &\cdots &1/N &1/N
\end{bmatrix}_{N\times N}:
\begin{bmatrix}
z^1\\
z^2\\
\vdots\\
z^N
\end{bmatrix}
\lmt \begin{bmatrix}
z^{(2,1)}\\
\vdots\\
z^{(N,N-1)}\\
z^\Sigma
\end{bmatrix}.\label{def:R}
\end{align}
Specifically, $\mathsf R$ is invertible, and the matrix inverse admits the following formula:
\begin{align}\label{def:Rinv}
(\mathsf R^{-1})_{ij}=
\begin{cases}
\displaystyle -\two (N-j)/N,&\; i\leq j,\; j\leq N-1,\\
\displaystyle \two j/N,&\;i>j,\;j\leq N-1,\\
\displaystyle 1, &\;1\leq i\leq N,\;j=N.
\end{cases}
\end{align} 

\noindent {\rm (2$\cc$)} There is a pathwise one-to-one correspondence between the two classes [$N$] and [$\mathcal E_N$] of families of processes $\{\mc Z^i_t \}_{1\leq i\leq N}$ and $\{\mathcal Z^\bi_t \}_{\bi\in\mc E_N}\cup \{\mc Z^\Sigma_0+W^\Sigma_t \}$ defined as follows.
\begin{itemize}[leftmargin=1.2cm]
\item [{\bf [$\bs N$]}] The processes $\{\mc Z^i_t \}$  satisfy the following SDEs:
\begin{align}\label{def:Zi}
\mc Z^i_t=\mc Z^i_0-\sum_{j:j\neq i}\int_0^t a_{i j}(s)(\mc Z^i_s-\mc Z^j_s)\d s+W^i_t,\quad \forall\;1\leq i\leq N,
\end{align}
such that the following conditions hold:
\begin{itemize}
\item [$\bullet$] For all $i\neq j$, $a_{ij}(s)=a_{ij}(s,\omega)$ is progressively measurable, the symmetry $a_{ij}=a_{ji}$ holds, and
$\int_0^t |a_{ij}(s)(\mc Z^i_s-\mc Z^j_s)|\d s<\infty$.
\item [$\bullet$]  $\{W^i_t \}_{1\leq i\leq N}$ consists of independent $d$-dimensional standard Brownian motions starting from $0$.
\end{itemize}

\item [{\bf [$ \mathbfcal{E}_{\bs N}$]}] The processes $\{\mc Z^\bi_t \}$ satisfy the following SDEs:
\begin{align}\label{def:tZ}
\mc Z^\bi_t=\mc Z^\bi_0-\sum_{\bj\in \mc E_N}\sigma(\bi)\cdot \sigma(\bj)\int_0^t a_\bj(s)\mc Z^\bj_s\d s+W^{\bi}_t,\quad \forall\;\bi\in \mc E_N,
\end{align}
$\{\sqrt{N}W^\Sigma_t \}$ is a $d$-dimensional standard Brownian motion starting from $0$, and $\mc Z^\Sigma_0\in E$, such that the following conditions hold:
\begin{itemize}
\item [$\bullet$] $\{\mc Z^\bi_0\}_{\bi\in \mc E_N}$ is difference-consistent.
\item [$\bullet$] $\sigma(\bi)\in \{-1,0,1\}^N$ is such that the $i\prime$-th component is $1$, the $i$-th component is $-1$, and all the other components are zero. Hence, $\sigma(\bi)\cdot\sigma(\bj)$ in \eqref{def:tZ} denotes the usual dot product of vectors.
\item [$\bullet$]  For all $\bi\in \mc E_N$, $a_{\bi}(s)=a_{\bi}(s,\omega)$ are progressively measurable, and we have $\int_0^t |a_{\bi}(s)\mc Z^\bi_s|\d s<\infty$.
\item [$\bullet$] $\{W^\bi_t \}_{\bi\in\mathcal E_N}$ satisfies the following properties. 
(i) It is independent of $\{W^\Sigma_t \}$; (ii) it has the same law of a linear transformation of Brownian motion such that $\{W^\bi_t \}$ is a $d$-dimensional standard Brownian motion with zero initial condition $0$ and real-valued components $\{W_t^{\bi,\ell}\}$, $1\leq \ell\leq d$, such that the following identities of quadratic variation hold:
$\la W^{\bi,\ell},W^{\bj,\ell}\ra_t=\sigma(\bi)\cdot \sigma(\bj)t/2$ and $
\la W^{\bi,k},W^{\bj,\ell}\ra_t=0$
for all $1\leq k, \ell\leq d$ with $k\neq \ell$ and all $\bi,\bj\in \mc E_N$.
\end{itemize}
\end{itemize}
Specifically, the one-to-one correspondence between class [$N$] and class [$\mathcal E_N$] is defined as follows: 
\begin{align}\label{def:bijective}
\begin{cases}
\displaystyle \frac{\mc Z_t^{i\prime}-\mc Z_t^i}{\two}=\mathcal Z_t^\bi,
\quad a_{ij}(s,\omega)=a_{i\vee\!\wedge j}(s,\omega),\quad \frac{W^{i\prime}_t-W^i_t}{\two}=W^{\bi}_t,\\
\vspace{-.4cm}\\
\displaystyle
\frac{1}{N}\sum_{i=1}^N \mc Z^i_t=\mc Z^\Sigma_0+W^\Sigma_t,\quad 
\frac{1}{N}\sum_{i=1}^N W^i_t=W^\Sigma_t,
\end{cases}
\end{align}
where $i\!\vee\!\!\wedge j=j\!\vee\!\!\wedge i\,\defeq\, 
(\max\{i,j\},\min\{i,j\})\in \mc E_N$ for any integers $1\leq i\neq j\leq N$.
\end{prop}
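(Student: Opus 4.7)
For (1$\cc$), I would verify $\mathsf R^{-1}\mathsf R = \mathrm{Id}$ column by column. The key observation is that the $k$-th column of $\mathsf R$ has at most three nonzero entries: $1/\two$ in row $k-1$ (if $k\geq 2$), $-1/\two$ in row $k$ (if $k\leq N-1$), and $1/N$ in row $N$. Multiplying against the formula \eqref{def:Rinv} then leaves a short case analysis in $(i,k)$, using only that $(\mathsf R^{-1})_{i,j}$ for $j\leq N-1$ depends on $j$ and on whether $i\leq j$ or $i>j$. Combined with the telescoping representation \eqref{eq:telescoping}, this yields the isomorphism $\CN\cong \delta\CN\times E$.

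For the forward direction of (2$\cc$), starting from class \textbf{[$\bs N$]} I would define all right-hand sides of \eqref{def:bijective} and check the requirements of class \textbf{[$\mathbfcal{E}_{\bs N}$]}. Writing $\mathbf{Z}_t = (\mc Z^1_t,\ldots,\mc Z^N_t)^\top$, the SDE \eqref{def:Zi} takes the matrix form $d\mathbf{Z}_t = -A(t)\mathbf{Z}_t\,dt + d\mathbf{W}_t$, where $A_{ii}=\sum_{j\neq i}a_{ij}$ and $A_{ij}=-a_{ij}$ for $i\neq j$. The symmetry $a_{ij}=a_{ji}$ makes $A$ symmetric, and its row sums vanish, so the center-of-mass row $\tfrac{1}{N}(1,\ldots,1)$ of $\mathsf R$ annihilates $A$: this yields $\mc Z^\Sigma_t = \mc Z^\Sigma_0 + W^\Sigma_t$ with $\sqrt{N}W^\Sigma$ a standard BM. For each $\bi\in\mc E_N$, writing $\mc Z^\bi := (\mc Z^{i\prime}-\mc Z^i)/\two = \sigma(\bi)\cdot\mathbf{Z}/\two$ gives $d\mc Z^\bi = -\tfrac{1}{\two}\sigma(\bi)\cdot A\mathbf{Z}\,dt + dW^\bi$, and the heart of this direction is the algebraic identity
\begin{align*}
\sigma(\bi)\cdot A\mathbf{Z} \;=\; \two\sum_{\bj\in\mc E_N}\sigma(\bi)\cdot\sigma(\bj)\,a_\bj\mc Z^\bj,
\end{align*}
which I would prove by expanding the left-hand side as $\sum_{j\neq i\prime}a_{i\prime j}(\mc Z^{i\prime}-\mc Z^j)-\sum_{j\neq i}a_{ij}(\mc Z^i-\mc Z^j)$ and re-indexing via $\bj = i\!\vee\!\!\wedge j$. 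Difference-consistency of $\{\mc Z^\bi_0\}$ is immediate from $(z^{i\prime}-z^i)+(z^i-z^j)=z^{i\prime}-z^j$, and the covariance identities for $\{W^\bi,W^\Sigma\}$ follow from bilinearity of $\la\cdot,\cdot\ra$ together with the independence of $\{W^i\}$.

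For the reverse direction, given class \textbf{[$\mathbfcal{E}_{\bs N}$]} data I would set $\mathbf{Z}_t := \mathsf R^{-1}(\mc Z^{(2,1)}_t,\ldots,\mc Z^{(N,N-1)}_t,\mc Z^\Sigma_0+W^\Sigma_t)^\top$, $\mathbf{W}_t := \mathsf R^{-1}(W^{(2,1)}_t,\ldots,W^{(N,N-1)}_t,W^\Sigma_t)^\top$, and $a_{ij}(s) := a_{i\!\vee\!\!\wedge j}(s)$. A short computation with the explicit $\mathsf R^{-1}$ of (1$\cc$) shows $(\mc Z^{k+1}_t-\mc Z^k_t)/\two = \mc Z^{(k+1,k)}_t$ for consecutive pairs. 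The difference-consistency of $\{\mc Z^\bi_t\}_{\bi\in\mc E_N}$ is \emph{preserved in $t$} because $\sigma(\bi)+\sigma(\bj)=\sigma(\bi\oplus\bj)$ whenever $i=j\prime$ forces the drift and martingale increments of $\mc Z^\bi+\mc Z^\bj-\mc Z^{\bi\oplus\bj}$ to vanish identically; combined with telescoping this upgrades the previous identity to $(\mc Z^{i\prime}_t-\mc Z^i_t)/\two = \mc Z^\bi_t$ for all $\bi\in\mc E_N$. Reversing the drift identity of the previous paragraph then recovers \eqref{def:Zi}; independence and standardness of $\{W^i\}$ follow by computing $\la W^i,W^j\ra$ from the stipulated covariance of $\{W^\bi,W^\Sigma\}$ and the entries of $\mathsf R^{-1}$; and that the two constructions are mutual inverses is immediate from $\mathsf R^{-1}\mathsf R=\mathrm{Id}$. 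The main obstacle is the algebraic drift identity above and the associated index bookkeeping between ordered pairs $\bi,\bj\in\mc E_N$ and unordered pairs in $\{1,\ldots,N\}$; everything else reduces to the linear algebra of (1$\cc$) and routine quadratic-variation computations.
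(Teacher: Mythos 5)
Your proposal is correct and follows essentially the same route as the paper: for (1$\cc$) an explicit entrywise verification of the inverse (your column-by-column check of $\mathsf R^{-1}\mathsf R=\mathrm{Id}$ is the paper's five-case verification of $\mathsf Q\mathsf M=\mathrm{Id}$ with the factors multiplied in the other order, after peeling off the diagonal scaling), and for (2$\cc$) the same drift re-indexing identity (your $A=\sum_{\bj}a_{\bj}\sigma(\bj)\sigma(\bj)^{\top}$ packaging is precisely the content of the paper's cases \eqref{eq:dc1}--\eqref{eq:dc5}), followed by propagation of difference-consistency and inversion through $\mathsf R^{-1}$ exactly as in the paper's Steps 2--3. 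The only substantive variation is how the reconstructed noise $\{W^i_t\}$ is identified as i.i.d.\ standard Brownian motions: you compute cross-variations from the stipulated covariances and invoke L\'evy's characterization, whereas the paper transfers the law from an auxiliary i.i.d.\ family $\{\wt W^i_t\}$ through the same linear map --- both are valid, the paper's route additionally yielding the a.s.\ difference-consistency of $\{W^\bi_t\}$ for free, which in your argument rests on the zero-quadratic-variation observation you correctly allude to.
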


\begin{rmk}[Transformations of relative motions]\label{rmk:motivation}
We have studied and applied particular SDEs of class [$N$] extensively in the former parts of this series. Nevertheless, our initial method in preparing this series continued the spirit of using relative motions as in the classical method for the two-body delta-Bose gas and in the formulation of the possible alternative discussed in \cite[Section~2]{C:SDBG2-4}. This method led to formulating SDEs of class [$\mathcal E_N$] first and then seeking the corresponding solutions $\{\mathcal Z^i_t\}_{1\leq i\leq N}$.\qed 
\end{rmk}

For the proof of Proposition~\ref{prop:RM}, the main technical steps address the question of how the drift terms of $\{\mc Z^i_t\}_{1\leq i\leq N}$ transform to the drift terms of $\{\mc Z^\bi_t\}_{\bi\in \mc E_N}$, and conversely. In doing so, we nevertheless circumvent direct applications of the formula \eqref{def:Rinv} of $\mathsf R^{-1}$ in those transformations of drifts. Formula~\eqref{def:Rinv} is used here only to prove the invertibility of $\mathsf R$.

\begin{rmk}[Symmetry and stochastic center of mass]\label{rmk:com}
{\rm (1$\cc$)} The mapping between $a_{i\vee\!\wedge j}$ and $a_{ij}$ in \eqref{def:bijective} is well-defined because of the symmetry assumption $a_{ij}=a_{ji}$.\medskip 

\noindent {\rm (2$\cc$)} By $a_{ij}=a_{ji}$, \eqref{def:Zi} implies that the {\bf stochastic center of mass} $N^{-1}\sum_{i=1}^N \mc Z^i_t$ satisfies
\begin{align}\label{eq:com}
\frac{1}{N}\sum_{i=1}^N \mc Z^i_t=\frac{1}{N}\sum_{i=1}^N \mc Z^i_0+\frac{1}{N}\sum_{i=1}^NW^i_t,
\end{align}
and $\{\sqrt{N}\cdot N^{-1}\sum_{i=1}^NW^i_t \}$ is a $d$-dimensional standard Brownian motion independent of the correlated Brownian motions $\{(W^{i\prime}_t-W^i_t)/\two \}_{\bi\in \mc E_N}$. In particular, it is according to \eqref{eq:com} that we choose the last row of $\mathsf  R$ in \eqref{def:R}. 
\qed 
\end{rmk}

\begin{proof}[Proof of Proposition~\ref{prop:RM} (1$\cc$)]
To prove \eqref{def:Rinv}, we make two observations to simplify the algebra a bit.
First, observe that by \eqref{def:R},
\begin{align}\label{def:Q}
\mathsf R=\begin{bmatrix}
1/\two &0&\cdots &0\\
0 &1/\two&\cdots &0\\
\vdots &\vdots &\ddots &\vdots\\
0 &\cdots & 1/\two &0\\
0 &\cdots &0 &1/N
\end{bmatrix} 
\mathsf  Q,\quad \mbox{for }\mathsf Q\;\defeq\, \begin{bmatrix}
-1 & 1 & 0&0&\cdots &0\\
0 &-1&1& 0&\cdots &0\\
\vdots &\vdots &\vdots &\ddots &\vdots&\vdots\\
0 &0 &0&\cdots &-1 &1\\
1&1 &1&\cdots &1 &1
\end{bmatrix}.
\end{align}
Second, for any $N\times N$-matrix $\widetilde{\mathsf M}=[\widetilde{\mathsf M}_1,\cdots,\widetilde{\mathsf M}_N]$ and any scalars $D_1,\cdots,D_N$,
\begin{align}\label{MD}
[\widetilde{\mathsf M}_1,\cdots,\widetilde{\mathsf M}_N]\mathsf {\rm diag}(\mathsf D_1,\cdots,\mathsf D_N)=[\mathsf D_1\widetilde{\mathsf M}_1,\cdots,\mathsf D_N \widetilde{\mathsf M}_N].
\end{align}
Thus, the proposed inverse in \eqref{def:Rinv} takes the form of the left-hand side of \eqref{MD} with $\mathsf D_1=\cdots=\mathsf D_{N-1}=\sqrt{2}$, $\mathsf D_N=N$, and $\widetilde{\mathsf M}$ chosen to be $\mathsf M$ given by 
\begin{align}\label{def:Qinv}
\mathsf M_{ij}\defeq
\begin{cases}
\displaystyle -(N-j)/N,&\; i\leq j,\; j\leq N-1,\\
\displaystyle j/N,&\;i>j,\;j\leq N-1,\\
\displaystyle 1/N, &\;1\leq i\leq N,\;j=N.
\end{cases}
\end{align} 
Combining these two observations shows that it is enough to prove the invertibility of $\mathsf Q$ with $\mathsf M=\mathsf Q^{-1}$, or just the identity
$\mathsf Q\mathsf M=\Id $. 

Now, to verify $\mathsf Q\mathsf M=\Id $, we use the definitions of $\mathsf Q$ and $\mathsf M$
 in \eqref{def:Q} and \eqref{def:Qinv} and consider $(\mathsf Q\mathsf M)_{ij}=\sum_{k=1}^N \mathsf Q_{ik}\mathsf M_{kj}$, $1\leq i,j\leq N$, for five cases which are mutually exclusive, collectively exhaustive: \medskip 

\noindent {\bf (1)} For all $1\leq i< j\leq N$,
\begin{align*}
(\mathsf Q\mathsf M)_{ij}&
=\mathsf Q_{ii}\mathsf M_{ij}+\mathsf Q_{i(i+1)}\mathsf M_{(i+1)j}
=-\mathsf M_{ij}+\mathsf M_{(i+1)j}
=
\begin{cases}
\displaystyle \frac{N-j}{N}-\frac{N-j}{N}=0,&j\leq N-1,\\
\vspace{-.3cm}\\
\displaystyle -\frac{1}{N}+\frac{1}{N}=0,&j=N.
\end{cases}
\end{align*}

\noindent {\bf (2)} For all $1\leq j<i\leq N-1$,
\begin{align*}
(\mathsf Q\mathsf M)_{ij}&
=\mathsf Q_{ii}\mathsf M_{ij}+\mathsf Q_{i(i+1)}\mathsf M_{(i+1)j}=-\mathsf M_{ij}+\mathsf M_{(i+1)j}=-\frac{j}{N}+\frac{j}{N}=0.
\end{align*}

\noindent {\bf (3)} For $1\leq j<i=N$,
\begin{align*}
(\mathsf Q\mathsf M)_{Nj}
=\sum_{k=1}^N\mathsf M_{kj}=j \left(-\frac{N-j}{N}\right)+(N-j)\frac{j}{N}=0.
\end{align*}

\noindent {\bf (4)} For all $1\leq i=j\leq N-1$,
\begin{align*}
(\mathsf Q\mathsf M)_{ii}
=\mathsf Q_{ii}\mathsf M_{ii}+\mathsf Q_{i(i+1)}\mathsf M_{(i+1)i}=-\mathsf M_{ii}+\mathsf M_{(i+1)i}=\frac{N-i}{N}+\frac{i}{N}=1.
\end{align*}

\noindent (5) For $i=j=N$,
\begin{align*}
(\mathsf Q\mathsf M)_{NN}
=\sum_{k=1}^N\mathsf M_{kN}=\sum_{k=1}^N \frac{1}{N}=1.
\end{align*}
By the last equalities in these five cases,  we have proved that the matrix $\mathsf M$ defined by \eqref{def:Qinv} satisfies $\mathsf Q\mathsf M={\rm Id}$. The proof is complete.
\end{proof}

\vspace{-.2cm}

\begin{proof}[Proof of Proposition~\ref{prop:RM} (2$\cc$)]
We first show that given a family of processes $\{\mc Z^i_t \}_{1\leq i\leq N}$ from class [$N$], the family of processes $\{\mathcal Z^\bi_t \}_{\bi\in \mc E_N}\cup \{\mc Z^\Sigma_0+W^\Sigma_t \}$ defined by \eqref{def:bijective} is in class [$\mathcal E_N$]. It suffices to show \eqref{def:tZ}. 
To this end, first, we use the assumed equations \eqref{def:Zi} 
 to get
\begin{align}
\begin{aligned}\label{eq:dc0}
\frac{\mc Z^{i\prime}_t-\mc Z^i_t}{\two}&=\frac{\mc Z^{i\prime}_0-\mc Z^i_0}{\two}-\sum_{j_1:j_1\neq i\prime}\int_0^t a_{i\prime\vee\!\wedge j_1}(s)
\frac{(\mc Z^{i\prime}_s-\mc Z^{j_1}_s)}{\two}\d s\\
&\quad\;+\sum_{j_2:j_2\neq i}\int_0^t a_{i\vee\!\wedge j_2}(s)\frac{(\mc Z^i_s-\mc Z^{j_2}_s)}{\two}\d s+\frac{W^{i\prime}_t-W^i_t}{\two},\quad \forall\;\bi=(i\prime,i)\in \mc E_N,
\end{aligned}
\end{align}
where $i\!\vee\!\!\wedge j=j\!\vee\!\!\wedge i$ is defined below \eqref{def:bijective}.

We now relate all of the summands of
\begin{align}\label{summands:ij}
-\sum_{j_1:j_1\neq i\prime}\int_0^t a_{i\prime\vee\!\wedge j_1}(s)\frac{(\mc Z^{i\prime}_s-\mc Z^{j_1}_s)}{\two}\d s
 +\sum_{j_2:j_2\neq i}\int_0^t a_{i\vee\!\wedge j_2}(s)\frac{(\mc Z^i_s-\mc Z^{j_2}_s)}{\two }\d s
\end{align}
to the nonzero summands of the sum over $\bj\in \mc E_N$ in \eqref{def:tZ} according to the following property: 
\begin{align}
\{\bj\in \mc E_N;\sigma(\bi)\cdot \sigma(\bj)\neq 0\}&=\{\bi\}\cup 
\{\bj\in \mathcal E_N\setminus\{\bi\};\bj\cap \bi\neq \varnothing\}\notag\\
&=\{\bi\}\cup \{\bj\in \mc E_N\setminus\{\bi\};i\prime\in \bj\}\cup \{\bj\in \mc E_N\setminus\{\bi\};i\in \bj\}\notag\\
&=\{\bi\}\cup \{i\prime\!\vee\!\!\wedge j_1;j_1\in \{1,\cdots,N\},j_1\neq i\prime\}\notag\\
&\quad\;\cup \{i\!\vee\!\!\wedge j_2;j_2\in \{1,\cdots,N\},j_2\neq i\}.\label{summands:ij1}
\end{align}
Specifically, we consider the following five cases which are mutually exclusive, collectively exhaustive for $j_1,j_2\in \{1,\cdots,N\}$ with
 $j_1\neq i\prime$ and $j_2\neq i$:\medskip 
 
\noindent {\bf (1)} For $j_1=i$ and $j_2=i\prime$, the sum of the two summands in \eqref{summands:ij} is
\begin{align}
\begin{aligned}\label{eq:dc1}
&\quad \;-\int_0^t a_{i\prime\vee\!\wedge j_1}(s)\frac{(\mc Z^{i\prime}_s-\mc Z^{j_1}_s)}{\two} \d s+\int_0^t a_{i\vee\!\wedge j_2}(s)\frac{(\mc Z^i_s-\mc Z^{j_2}_s)}{\two} \d s\\
&=-2\int_0^t a_\bi(s)\frac{(\mc Z^{i\prime}_s-\mc Z^{i}_s)}{\two}\d s
=-\sigma(\bi)\cdot \sigma(\bi) \int_0^t a_\bi(s)\mc  Z^\bi_s\d s.
\end{aligned}
\end{align}
\noindent {\bf (2)} For $j_1>i\prime$, we have $\min(i\prime,j_1)=i\prime=\max \bi$ so that $\sigma(i\prime\!\vee\!\!\wedge j_1)\cdot \sigma(\bi)=-1$. This gives
\begin{align}
\begin{aligned}\label{eq:dc2}
-\int_0^t a_{i\prime\vee\!\wedge j_1}(s)\frac{(\mc Z^{i\prime}_s-\mc Z^{j_1}_s)}{\two} \d s&=-(-1)\int_0^t a_{i\prime\vee\!\wedge j_1}(s)\frac{(\mc Z^{j_1}_s-\mc Z^{i\prime}_s)}{\two} \d s\\
&=-\sigma( i\prime\!\vee\!\!\wedge j_1)\cdot \sigma(\bi)\int_0^t a_{i\prime\vee\!\wedge j_1}(s)\mc Z^{i\prime\vee\!\wedge j_1}_s\d s.
\end{aligned}
\end{align}
\noindent {\bf (3)} For $i\prime>j_1\neq i$, we have $\max(i\prime,j_1)=i\prime=\max \bi$ so that $\sigma({i\prime\!\vee\!\!\wedge j_1})\cdot \sigma(\bi)=1$. This gives
\begin{align}
\begin{aligned}\label{eq:dc3}
-\int_0^t a_{i\prime\vee\!\wedge j_1}(s)\frac{(\mc Z^{i\prime}_s-\mc Z^{j_1}_s)}{\two}\d s=-\sigma({i\prime\!\vee\!\!\wedge j_1})\cdot \sigma(\bi)\int_0^t a_{{i\prime\vee\!\wedge j_1}}(s)\mc Z^{i\prime\vee\!\wedge j_1}_s\d s.
\end{aligned}
\end{align}
\noindent {\bf (4)} For $i\prime \neq j_2>i$, we have $\min(i,j_2)=i=\min \bi$ so that $\sigma(i\!\vee\!\!\wedge j_2)\cdot \sigma(\bi)=1$. This gives
\begin{align}
\begin{aligned}\label{eq:dc4}
\int_0^t a_{i\vee\!\wedge j_2}(s)\frac{(\mc Z^i_s-\mc Z^{j_2}_s)}{\two} \d s&=-\int_0^t a_{i\vee\!\wedge j_2}(s)\frac{(\mc Z^{j_2}_s-\mc Z^i_s)}{\two}\d s\\
&=-\sigma(i\!\vee\!\!\wedge j_2)\cdot \sigma(\bi)\int_0^t a_{i\vee\!\wedge j_2}(s )\mc Z^{i\vee\!\wedge j_2}_s\d s.
\end{aligned}
\end{align}
\noindent {\bf (5)} For $i>j_2$, we have $\max(i,j_2)=i=\min \bi$ so that $\sigma(i\!\vee\!\!\wedge j_2)\cdot \sigma(\bi)=-1$. This gives
\begin{align}
\begin{aligned}\label{eq:dc5}
\int_0^t a_{i\vee\!\wedge j_2}(s)\frac{(\mc Z^i_s-\mc Z^{j_2}_s)}{\two} \d s=-\sigma(i\!\vee\!\!\wedge j_2)\cdot \sigma(\bi)\int_0^t a_{i\vee\!\wedge j_2}(s)\mc Z^{i\vee\!\wedge j_2}_s\d s.
\end{aligned}
\end{align}

Recall once again that the above five cases are mutually exclusive and collectively exhaustive for $j_1,j_2\in \{1,\cdots,N\}$ with $j_1\neq i\prime$ and $j_2\neq i$, and we have \eqref{summands:ij1}. Hence, the sum of the right-hand sides of the last equalities of these five cases equals the sum over $\bj\in \mc E_N$ in \eqref{def:tZ}. Rewriting \eqref{eq:dc0} with $\mc Z^\bi_t,\mc Z^\bi_0,W^\bi_t$ defined in \eqref{def:bijective} then gives the same equation as \eqref{def:tZ}. We have proved that given a family of processes $\{\mc Z^i_t \}_{1\leq i\leq N}$ from class [$N$], the family of processes $\{\mathcal Z^\bi_t \}_{\bi\in \mc E_N}\cup \{\mc Z^\Sigma_0+W^\Sigma_t \}$ defined by \eqref{def:bijective} is in class [$\mathcal E_N$].

Next, we show that given $\{\mathcal Z^\bi_t \}_{\bi\in \mc E_N}\cup \{\mc Z^\Sigma_0+W^\Sigma_t \}$, \eqref{def:bijective} uniquely defines 
a family
$\{\mc Z^i_t \}_{1\leq i\leq N}$. By using the matrix $\mathsf R$ defined in \eqref{def:R}, all of the equations in 
\eqref{def:bijective}, excluding $a_{ij}(s,\omega)=a_{i\vee\!\wedge j}(s,\omega)$, are equivalent to the following equations:
\begin{align}
[\mc Z^1_t,\cdots,\mc Z^N_t]^\top&\,\defeq\, \mathsf R^{-1}[\mc Z^{(2,1)}_t,\cdots,\mc Z^{(N,N-1)}_t, \mc Z^\Sigma_0+W^\Sigma_t]^\top,\label{def:dc-invZ}\\
[ W^1_t,\cdots, W^N_t]^\top&\,\defeq\, \mathsf R^{-1}[ W^{(2,1)}_t,\cdots, W^{(N,N-1)}_t,W^\Sigma_t]^\top.\label{def:dc-invB}
\end{align}
In the following, we show that
with $a_{ij}\defeq a_{i\vee\!\wedge j}$ as defined in \eqref{def:bijective}, $\{\mc Z^i_t\}_{1\leq i\leq N}$ is in class [$N$]. \medskip 

\noindent {\bf Step 1.} We show that $\{W^i_t \}_{1\leq i\leq N}$ consists of i.i.d. $d$-dimensional standard Brownian motions starting from $0$. By the assumptions on $\{W^\bi_t\}_{\bi\in \mc E_N}\cup \{W^\Sigma_t\}$,
\begin{align}
&\{W^\bi_t\}_{\bi\in \mc E_N}\ind \{W^\Sigma_t\},\\
&\{W^\bi_t \}_{\bi\in \mc E_N}\stackrel{\rm (d)}{=}\{\wt{W}^\bi_t\}_{\bi\in \mc E_N}\,\defeq\,\{(\wt{W}^{i\prime}_t-\wt{W}^i_t)/\two \}_{\bi\in \mc E_N},\label{W:id1}\\
&\{W_t^\Sigma\}\stackrel{\rm (d)}{=}\{\wt{W}^\Sigma_t\}\,\defeq\,\{N^{-1}\textstyle \sum_{i=1}^N\wt{W}^i_t\},
\end{align}
where $\{\wt{W}^i_t \}_{1\leq i\leq N}$ are i.i.d. $d$-dimensional standard Brownian motions starting from $0$ so a fortiori $\{(\wt{W}^{i\prime}_t-\wt{W}^i_t)/\two \}_{ \bi\in \mc E_N}\ind \{N^{-1}\textstyle \sum_{i=1}^N\wt{W}^i_t \}$.  Therefore,
$\{W^\bi_t\}_{\bi\in \mc E_N}\cup \{W^\Sigma_t\}\stackrel{\rm (d)}{=}\{\wt{W}^\bi_t\}_{\bi\in \mc E_N}\cup \{\wt{W}^\Sigma_t\}$.
By the equivalent of \eqref{def:dc-invB} for ``$\wt{W}$,'' the required property of $\{W^i_t\}_{1\leq i\leq N}$ follows. \medskip 
 \medskip 

\noindent {\bf Step 2.}
Next, we show that with probability one, each of the following three processes satisfies difference-consistency at every fixed $t$:
\begin{align}\label{dcfamilies}
\textstyle \{-\sum_{\bj\in \mc E_N}\sigma(\bi)\cdot \sigma(\bj)\int_0^t a_\bj(s)\mc Z^\bj_s\d s\}_{\bi\in \mc E_N},\;\; \{W^\bi_t\}_{\bi\in \mc E_N},\;\; \{\mc Z^\bi_t\}_{\bi\in \mc E_N}.
\end{align}
The difference-consistency in the first process of \eqref{dcfamilies} is a consequence of Lemma~\ref{lem:dc} proven below, and the difference-consistency in the second process of \eqref{dcfamilies} follows from \eqref{W:id1}. Finally, the difference-consistency in the third process of \eqref{dcfamilies} follows upon applying the difference-consistency in the first two processes of \eqref{dcfamilies} and the assumed difference-consistency of $\{\mc Z^\bi_0\}_{\bi\in \mc E_N}$ to  \eqref{def:tZ}, since difference-consistency respects linearity. 
\medskip 

\noindent {\bf Step 3.}
We show that $\{\mc Z^i_t \}_{1\leq i\leq N}$ satisfies \eqref{def:Zi}. First, by using \eqref{def:dc-invZ}, the difference-consistency of $\{\mc Z^\bi_t \}_{\bi\in \mc E_N}$ obtained in Step~2, and the telescoping representations \eqref{eq:telescoping}, we know that $(\mc Z^{i\prime}_t-\mc Z^i_t)/\two=\mc Z_t^\bi$ for all $\bi=(i\prime,i)\in \mc E_N$. Hence, by reversing the arguments in \eqref{eq:dc1}--\eqref{eq:dc5}, \eqref{def:tZ} implies 
\eqref{eq:dc0}. Then we consider $N$ many equations as follows:
\begin{itemize}
\item [\bf (I)] the first $N-1$ equations are given by \eqref{eq:dc0} for $\bi=(2,1),(3,2),\cdots,(N,N-1)$, and
\item [\bf (II)] the $N$-th equation is 
\[
\frac{1}{N}\sum_{i=1}^N\mc Z^i_t=\frac{1}{N}\sum_{i=1}^N\mc Z^i_0-\frac{1}{N}\sum_{i=1}^N \sum_{j:j\neq i}\int_0^t a_{i j}(s)(\mc Z^i_s-\mc Z^j_s)\d s
+\frac{1}{N}\sum_{i=1}^N W^i_t.
\]
\end{itemize}
Note that the $N$-th equation combines the following four equations:
\begin{itemize}
\item  $N^{-1}\sum_{i=1}^N\mc Z^i_t= \mc Z^\Sigma_0+W^\Sigma_t$ by using
 \eqref{def:dc-invZ} and the last row of $\mathsf R$ in \eqref{def:R}. 
 \item $\mc Z^\Sigma_0=N^{-1}\sum_{i=1}^N \mc Z^i_0$ by taking $t=0$ in the previous equation and using the assumption that $W^\Sigma_0=0$.
 \item 
$W^\Sigma_t=N^{-1}\sum_{i=1}^N W^i_t$ by using \eqref{def:dc-invB} and the last row of $\mathsf R$ in \eqref{def:R}. 
\item $N^{-1}\sum_{i=1}^N \sum_{j:j\neq i}\int_0^t a_{i j}(s)(\mc Z^i_s-\mc Z^j_s)\d s=0$ by the symmetry $a_{ij}=a_{ji}$ for all $i\neq j$, since $a_{ij}\defeq a_{i\vee\!\wedge j}$ in \eqref{def:bijective}.
\end{itemize}

Now, observe that both sides of the $N$ equations from {\bf (I)} and {\bf (II)} are images of $\mathsf R$. Solving the $N$ equations by applying $\mathsf R^{-1}$ on both sides gives \eqref{def:Zi}. This property and Step~1 prove that $\{\mc Z^i_t\}_{1\leq i\leq N}$ is in class $[N]$. The proof is complete.
\end{proof}

\vspace{-.5cm}
\begin{lem}\label{lem:dc}
Given an integer $N\geq 3$ and an arbitrary family  $\{p^\bi\}_{\bi\in \mc E_N}$ of $E$-valued points, the family 
$\{q^\bi\}_{\bi\in \mc E_N}$ defined by $q^\bi\,\defeq-\sum_{\bj\in \mc E_N}\sigma(\bi)\cdot\sigma(\bj)p^\bj$ is difference-consistent.
\end{lem}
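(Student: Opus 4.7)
The plan is to reduce the difference-consistency identity for $\{q^\bi\}$ to a single pointwise identity about the vectors $\sigma(\bi) \in \{-1,0,1\}^N$, and then verify that identity component by component from the definition of $\sigma$. Specifically, fix any pair $\bi = (i',i)$ and $\bj = (j',j)$ in $\mc E_N$ with $i = j'$, so that $\bi \oplus \bj = (i',j)$. By the definition of $q^\bi$ and linearity in $\bi$ through the factor $\sigma(\bi)\cdot\sigma(\bk)$, I would write
\begin{align*}
q^\bi + q^\bj - q^{\bi\oplus\bj}
= -\sum_{\bk\in\mc E_N}\bigl[\sigma(\bi)+\sigma(\bj)-\sigma(\bi\oplus\bj)\bigr]\cdot\sigma(\bk)\,p^\bk.
\end{align*}
Hence it suffices to prove the vector identity $\sigma(\bi)+\sigma(\bj)=\sigma(\bi\oplus\bj)$ whenever $i=j'$, independent of the family $\{p^\bk\}$.

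For the vector identity, I would inspect each coordinate of $\sigma(\bi)+\sigma(\bj)\in\R^N$ using the recipe in Proposition~\ref{prop:RM}~(2$\cc$). Under the hypothesis $i=j'$, the coordinate $i'$ of $\sigma(\bi)$ equals $1$ and of $\sigma(\bj)$ equals $0$, giving $1$; the coordinate $i=j'$ equals $-1$ in $\sigma(\bi)$ and $+1$ in $\sigma(\bj)$, giving $0$; the coordinate $j$ equals $0$ in $\sigma(\bi)$ and $-1$ in $\sigma(\bj)$, giving $-1$; and every remaining coordinate is $0$ in both. The resulting vector has $i'$-th entry $1$, $j$-th entry $-1$, and zeros elsewhere, which is precisely $\sigma((i',j))=\sigma(\bi\oplus\bj)$. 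This establishes the vector identity, and substituting back gives $q^\bi+q^\bj=q^{\bi\oplus\bj}$, i.e., the defining relation \eqref{defeq:dc} of difference-consistency.

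There is no serious obstacle: the lemma is essentially the statement that the map $\bi\mapsto\sigma(\bi)$ itself is difference-consistent (as a family in $\R^N$), and then that property is transported linearly through the bilinear expression defining $q^\bi$. The only point requiring care is tracking the two sign conventions in $\sigma(\bi)$ (a $+1$ at the larger index, a $-1$ at the smaller), which is what makes the cancellation at the shared index $i=j'$ come out cleanly; the coordinate-by-coordinate check handles this unambiguously.
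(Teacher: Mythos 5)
Your proposal is correct and follows essentially the same route as the paper's proof: both reduce the claim to the vector identity $\sigma(\bi)+\sigma(\bj)=\sigma(\bi\oplus\bj)$ for $i=j\prime$ and then transport it through the linear definition of $q^\bi$. The only difference is that you spell out the coordinate-by-coordinate verification of that identity, which the paper states without detail.
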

\begin{proof}
For $\bi,\bj\in \mathcal E_N$ with $i=j\prime$ and for $\bi\oplus\bj$ defined in \eqref{defeq:dc}, $\sigma(\bi)+\sigma(\bj)=\sigma(\bi\oplus \bj)$. Thus, 
\[
q^{\bi}+q^{\bj}=-\sum_{\bk\in \mc E_N}\sigma(\bi)\cdot\sigma(\bk)p^\bk-\sum_{\bk\in \mc E_N}\sigma(\bj)\cdot\sigma(\bk)p^\bk=-\sum_{\bk\in \mc E_N}\sigma(\bi\oplus \bj)\cdot\sigma(\bk)p^\bk=q^{\bi\oplus \bj},
\] 
which gives the required difference-consistency of $\{q^\bi\}_{\bi\in \mc E_N}$ by Definition~\ref{def:dc}. 
\end{proof}

\vspace{-1cm}

\end{document}